\tikzstyle{V}=[draw, fill =black, circle, inner sep=0pt, minimum size=1.5pt]
\tikzstyle{C}=[draw, fill =white, circle, inner sep=0pt, minimum size=1.5pt]
\tikzstyle{over}=[draw=white,double=black,line width=2pt, double distance=.5pt]
\tikzset{
	ch/.style={circle,draw,on chain,inner sep=2pt},
	chj/.style={ch,join},
	every path/.style={shorten >=4pt,shorten <=4pt}
	}
\numberwithin{equation}{section}
\theoremstyle{definition}
\newtheorem{theorem}{Theorem}[section]
\newtheorem{proposition}[theorem]{Proposition}
\newtheorem{definition}[theorem]{Definition}
\newtheorem{remark}[theorem]{Remark}
\newtheorem{example}[theorem]{Example}
\def\<{\langle}
\def\>{\rangle}
\newcommand{\card}{\mathrm{card}}
\newcommand{\diam}{\text{diam}}
\newcommand{\st}{\textup{st}}
\title{Nonstandard approach to Hausdorff outer measure}
\author[Mee Seong Im]{Mee Seong Im}
\address{Department of Mathematical Sciences, United States Military Academy, West Point, NY 10996}
    \email{meeseongim@gmail.com}
\begin{document}

\begin{abstract} 
We use nonstandard techniques, in the sense of Abraham Robinson, to give the exact Hausdorff outer measure.  

\end{abstract}  
 
\maketitle 

\section{Introduction}

Developing a notion of dimension and measuring sets is foundational in mathematics (cf.~\cite{MR2257838,Fremlin-V1,Fremlin-V2,Fremlin-V3,Fremlin-V4,Fremlin-V5}). 
Originally studied by B. Riemann, they have been further developed by H. Schwarz, F. Klein, D. Hilbert, H. Lebesgue, F. Hausdorff, and others. 

In this manuscript, we focus on Hausdorff measure, which is well-defined for \textit{any} set. That is, a $d$-dimensional Hausdorff measure of a measurable subset of $\mathbb{R}^n$ is proportional to the dimension of the set. In particular, $d$-dimensional Hausdorff measure exists for any real number $d\geq 0$ (so $d$ is not necessarily an integer). 
This implies that the Hausdorff dimension of a set is greater than or equal to its topological dimension, and less than or equal to the dimension of the metric space imbedding the set (thus it is a refinement of an integral dimension).

We use a nonstandard approach to study Hausdorff measure since nonstandard techniques provide a richer insight to standard objects and sets. 
In particular, we show how discrete measure (cf. Definition~\ref{defn:h-delta-s}) gives rise to Hausdorff outer measure, which is our main result:  
\begin{theorem}\label{thm:main} 
Let $A$ be a subset in $[0,1]$. Let $\mathcal{H}^s(A)$ be the Hausdorff outer measure of dimension $s$. Then 
\begin{enumerate}[label=(\alph*)]
\item\label{item:a} $\mathcal{H}^s(A) \leq \inf\{\st \: h_{\delta}^s(B):  \text{internal} \: B \supseteq \st^{-1}(A)\}$ for all infinitesimal $\delta$, and 
\item\label{item:b} $\mathcal{H}^s(A) = \lim_{\delta\rightarrow 0} \inf \{ \st \: h_{\delta}^s(B): 
\text{internal}\: B \supseteq \st^{-1}(A)\}$, where $\delta$ ranges over standard positive values in $\mathbb{R}$ in the limit.  
\end{enumerate} 
\end{theorem}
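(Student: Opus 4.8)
The plan is to reduce both parts to a single comparison lemma and then apply it twice. Throughout I use that $\mathcal{H}^s(A)=\sup_{\epsilon>0}\mathcal{H}^s_\epsilon(A)=\lim_{\epsilon\to0}\mathcal{H}^s_\epsilon(A)$, with $\mathcal{H}^s_\epsilon(A)$ nondecreasing as $\epsilon\downarrow0$, and I abbreviate $F(\delta)=\inf\{\st\,h_\delta^s(B):\text{internal }B\supseteq\st^{-1}(A)\}$. Reading Definition~\ref{defn:h-delta-s} so that $h_\delta^s(B)$ is the internal infimum of $\sum_j(\diam C_j)^s$ over internal covers $\{C_j\}_{j\le N}$ of $B$ of mesh $\le\delta$ (with $N\in{}^*\NN$), the whole argument rests on:
\[\text{(Core)}\qquad \mathcal{H}^s_\epsilon(A)\le\st\,h_\delta^s(B)\quad\text{for every internal }B\supseteq\st^{-1}(A)\text{ and every standard }\epsilon\ge\delta.\]

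To prove (Core) I would fix a near-optimal internal $\delta$-cover $\{C_j\}_{j\le N}$ of $B$ with $\sum_{j\le N}(\diam C_j)^s\approx h_\delta^s(B)$ and pass to shadows. Since $A\subseteq\st^{-1}(A)\subseteq B\subseteq\bigcup_{j\le N}C_j$ and each $a\in A$ is standard, $a=\st(a)$ lies in the standard closed interval $V_j$ spanned by $\st(C_j)$ whenever $a\in C_j$; as $\diam V_j=\st(\diam C_j)\le\delta\le\epsilon$, the \emph{standard} intervals $V_j$ satisfy $A\subseteq\bigcup_{j\le N}V_j$. This is a hyperfinite family of standard sets, so by Lindel\"of-ness of $[0,1]$ I may extract a countable subfamily $V_{j_1},V_{j_2},\ldots$ still covering $A$; it is a bona fide standard $\epsilon$-cover. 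Its total is controlled because for each standard $K$ the partial sum $\sum_{k=1}^K(\diam V_{j_k})^s=\st\big(\sum_{k=1}^K(\diam C_{j_k})^s\big)\le\st\big(\sum_{j\le N}(\diam C_j)^s\big)=\st\,h_\delta^s(B)$, where $\st$ distributes over the \emph{finite} sum; letting $K\to\infty$ gives $\mathcal{H}^s_\epsilon(A)\le\sum_k(\diam V_{j_k})^s\le\st\,h_\delta^s(B)$. Part~\ref{item:a} is then immediate: for infinitesimal $\delta$ every standard $\epsilon>0$ satisfies $\epsilon\ge\delta$, so (Core) and $\sup_\epsilon$ give $\mathcal{H}^s(A)\le\st\,h_\delta^s(B)$, and infimizing over $B$ yields \ref{item:a}. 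Taking instead standard $\delta$ and $\epsilon=\delta$ gives $\mathcal{H}^s_\delta(A)\le F(\delta)$, which after $\delta\to0$ is the inequality $\le$ in \ref{item:b}.

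For the reverse inequality in \ref{item:b} I must, for each standard $\delta$, \emph{produce} an internal $B\supseteq\st^{-1}(A)$ with $\st\,h_\delta^s(B)\le\mathcal{H}^s_\delta(A)$. Start from a standard countable $\delta$-cover $(U_i)_{i\in\NN}$ of $A$ by open sets with $\sum_i(\diam U_i)^s\le\mathcal{H}^s_\delta(A)+\eta$; openness forces the halo of each $a\in A$ into some ${}^*U_i$, so $\st^{-1}(A)\subseteq\bigcup_{i\in\NN}{}^*U_i$. The obstacle is that this countable union need not be internal. I resolve it by transfer-and-truncation: transfer the sequence to an internal sequence $({}^*U_i)_{i\le M}$ and set $B=\bigcup_{i\le M}{}^*U_i$, a \emph{hyperfinite} (hence internal) union. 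Choosing $M\in{}^*\NN$ infinite makes $\{1,\dots,M\}\supseteq\NN$, so $B\supseteq\bigcup_{i\in\NN}{}^*U_i\supseteq\st^{-1}(A)$; and since $\sum_i(\diam U_i)^s$ converges, overspill gives $\sum_{i\le M}(\diam{}^*U_i)^s\approx\sum_{i\in\NN}(\diam U_i)^s$, whence $\st\,h_\delta^s(B)\le\st\sum_{i\le M}(\diam{}^*U_i)^s\le\mathcal{H}^s_\delta(A)+\eta$. Letting $\eta\to0$ gives $F(\delta)\le\mathcal{H}^s_\delta(A)$; combined with the previous paragraph, $F(\delta)=\mathcal{H}^s_\delta(A)$ for all standard $\delta$, and since both sides increase to their supremum as $\delta\downarrow0$, $\lim_{\delta\to0}F(\delta)=\lim_{\delta\to0}\mathcal{H}^s_\delta(A)=\mathcal{H}^s(A)$, which is \ref{item:b}.

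The step I expect to be the main obstacle is the passage inside (Core) from a hyperfinite, \emph{a priori} external, optimal internal cover of the halo $\st^{-1}(A)$ to an honest standard countable cover of $A$ of no larger $s$-content: shadows turn the internal pieces into standard intervals and Lindel\"of extracts a countable subcover, but the delicate point is that $\st$ is only finitely additive, so the bound on the total must be obtained through standard finite partial sums rather than by taking the standard part of the full hyperfinite sum. The dual subtlety, keeping $B$ internal while still engulfing every halo, is handled by the hyperfinite truncation at an infinite index $M$.
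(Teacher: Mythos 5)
There is a genuine gap in your (Core) lemma, and it sits exactly where the paper has to work hardest. When $\delta$ is infinitesimal (the case needed for part \ref{item:a}), every piece $C_j$ of an optimal partition of $B$ has infinitesimal diameter, so every shadow $V_j$ is a single point. A hyperfinite family of singletons covering $A$ admits no countable subcover unless $A$ is countable --- Lindel\"of extraction applies to open covers (or at least to families of intervals with nonempty interior), not to covers by degenerate sets --- and even if the extraction were possible, your partial-sum bound would give $\sum_{k}(\diam V_{j_k})^s=0$ and hence the false conclusion $\mathcal{H}^s(A)\le 0$ for every $A$. The same degeneracy can occur for standard $\delta$ as well, since nothing forces an optimal partition of $B$ into $\delta$-intervals to use only pieces of non-infinitesimal size. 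The missing idea, which is the heart of the paper's proof of \ref{item:a}, is to coalesce consecutive pieces of the optimal partition into intervals $W_k$ of non-infinitesimal length (between $\eta$ and $\eta+\delta$ for a standard $\eta>0$) \emph{before} taking shadows: the subadditivity $(a+b)^s\le a^s+b^s$ for $s\le 1$ guarantees that merging does not increase the $s$-sum, and an overspill/underspill argument (the monad of each $a\in A$ lies inside a subinterval of $B$ of non-infinitesimal length) guarantees that the shadows of those $W_k$ having nonempty interior form a genuine countable $\eta$-cover of $A$. Letting $\eta\to 0$ then yields \ref{item:a} and the forward inequality of \ref{item:b}.

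Your second half --- the reverse inequality in \ref{item:b}, obtained by extending a near-optimal standard open $\delta$-cover to an internal hyperfinite sequence, truncating at an infinite index so that the union remains internal yet still engulfs every monad, and controlling the tail by overspill --- is essentially the paper's argument; the paper additionally enlarges each $U_i$ by $(\varepsilon/2)^{1/s}2^{-i/s}$ to force the starred sets to absorb the monads and then tracks the resulting slight increase of the mesh, a detail you gloss over but which washes out in the limit $\delta\to 0$. Since both \ref{item:a} and the forward direction of \ref{item:b} rest on (Core), the merging step must be supplied before the proof stands.
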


Theorem~\ref{thm:main} introduces \textit{counting} methods that calculate the correct Hausdorff measure. Other methods, such as Minkowski, \textit{i.e.}, box-counting, methods, have been proven to overestimate or underestimate the correct dimension of a set, particularly if it is irregular (cf. \cite{gorski2012accuracy,mainieri1993equality}). In fact, if a set or its complement is not self-similar, then the box-counting method fails since there is no dimension for which the limit converges. 
 
Secondly, the nonstandard version is easier to compute than the standard Hausdorff version. This is because the discrete measure of $\mathcal{H}^s(A)$ has a fixed $\delta$, not a varying one. Also, taking the supremum over any $\delta$ is omitted to compute $\mathcal{H}^s(A)$ (see Definition~\ref{defn:s-dim-Hausdorff-outer-measure}) since the process of taking the supremum over all $\delta$'s has already been applied when choosing our $\delta$. So although this operation is omitted, we still obtain the accurate Hausdorff dimension, which is defined for any set. 

Throughout this manuscript, we assume that the set $\mathbb{N}$ of natural numbers includes $0$. 

\subsection{Summary of the sections}
\label{subsection:summary}
In \S\ref{section:nonstandard-analysis}, we introduce basic and key ideas in nonstandard analysis needed to prove Theorem~\ref{thm:main}.  In \S\ref{section:Hausdorff-measure}, we give a background on Hausdorff measure. In \S\ref{section:nonstandard-techniques}, we develop a notion of computing measure in the nonstandard universe. 
We give a relation between $\mathcal{H}^s(A)$ and Lebesgue outer measure $\overline{\lambda}(A)$ for nice sets $A$ (cf.~\eqref{eqn:Hausdorff-outer-measure-Lebesgue-outer-measure}), which provides an alternative (discrete) way to obtain Hausdorff outer measure.  
In \S\ref{section:proof-main-theorem}, we prove Theorem~\ref{thm:main}. 
Finally, we conclude with an example in \S\ref{section:thm-inequality-example}, showing that Theorem~\ref{thm:main}\ref{item:a} cannot be replaced by an equality.

\subsection*{Acknowledgment}
The author would like to thank Richard Kaye, for his patience and numerous helpful discussions. 
M.S.I. was partially supported by the School of Mathematics at the University of Birmingham in the United Kingdom. 
 
\section{Nonstandard analysis}
\label{section:nonstandard-analysis}

We give a background on the theory of nonstandard analysis (also see,~\textit{e.g.},~\cite{Robinson96,Robert03,AHFL86}).

\subsection{Ultrafilters}
\label{subsection:ultrafilters}

A filter over a set $I$ is a nonempty collection $\mathscr{U}$ of subsets of $I$ that satisfies the following: 
\begin{enumerate}
\item if $A,B\in \mathscr{U}$, then $A\cap B\in \mathscr{U}$, and 
\item if $A\in \mathscr{U}$ and $B\supseteq A$, then $B\in \mathscr{U}$. 
\end{enumerate}  
Filters can be used to define a notion of \textit{large} subsets of $I$. 
That is, a set $A\subseteq \mathbb{N}$ is large if and only if 
\begin{enumerate}
\item a finite subset of $\mathbb{N}$, including the empty set, is not large, 
\item the set of natural numbers is large, 
\item two subsets of $\mathbb{N}$ are large, then all supersets of their intersection are also large, 
\item its complement $A^c$ is not large. 
\end{enumerate} 
This notion of largeness is a special kind of filter called an ultrafilter.

\begin{definition}
A nonprincipal ultrafilter $\mathscr{U}$ of subsets of a nonempty set $I$ contains all large subsets of $I$.  
\end{definition}
It is a maximal proper filter since it cannot contain any more subsets without including the empty set and becoming improper. 
Each infinite set $I$ contains at least one maximal nonprincipal proper filter, and 
if there are more than one nonprincipal ultrafilters, then they are isomorphic to each other. 
Let $a=(a_0,a_1,\ldots), b=(b_0,b_1,\ldots)\in \mathbb{R}^{\mathbb{N}}$, sequences of real numbers. We say $a=b$ if and only if $a_i=b_i$ for a large set of $i\in \mathbb{N}$, 
and we write 
$a\sim b$ if $\{ i: a_i=b_i\}\in \mathscr{U}$.

Let $A\subseteq \mathbb{R}$ be a set. The starred version ${}^*A$ of $A$ is defined as $A^{\mathbb{N}}/\!\!\sim$, where $\sim$ is the equivalence relation given above; we say ${}^*A$ is the nonstandard version of $A$. 
With a slight abuse of notation, we write 
$a = [a_0,a_1,\ldots]\in {}^*\mathbb{R}$ to denote the equivalence class of $a=(a_0,a_1,\ldots)$.

\subsection{Nonstandard real and natural numbers}

Let ${}^*\mathbb{R}$ be the set of nonstandard reals. The algebraic operations on ${}^*\mathbb{R}$ are componentwise, and the partial order $<$ on ${}^*\mathbb{R}$ is defined to be $a<b$ if and only if $\{i: a_i < b_i \}\in \mathscr{U}$.  
Note that $\mathbb{R}\hookrightarrow {}^*\mathbb{R}$ via the constant-valued sequence embedding $r\mapsto {}^*r= [r,r,r,\ldots]$.

We will now describe hyperreal numbers.    
Suppose  
$a = [a_0,a_1,a_2,\ldots]\in {}^*\mathbb{R}$. Then for a large $i\in \mathbb{N}$,  
$a$ is a  
\begin{enumerate}  
\item positive infinitesimal if $0\leq a_i <t$ for every positive real $t$, 
\item finite if $s < a_i<t$ for some $s,t\in \mathbb{R}$, 
\item positive infinite if $s< a_i$ for every $s\in \mathbb{R}$, and 
\item standard if $a_i=a_j$ for any $i$ and $j$. 
\end{enumerate}


\begin{example} 
Let $a=[1/n]_{n\in \mathbb{N}}$ and $b = [1/n^2]_{n\in \mathbb{N}}$ be hyperreals for $n\not=0$. Then $a$ is an infinitesimal since $\{n: |1/n|<x \}\in \mathscr{U}$ for any positive real $x$. 
By the same reason, $b$ is also an infitesimal. 
The only difference between $a$ and $b$ is their rate of convergence: $a \geq b >0$. 
\end{example}

\begin{remark}  
The number ${}^*0$ is the only number that is both an infinitesimal and standard.  
\end{remark} 

We will also say a hyperreal number $x\in {}^*\mathbb{R}$ is an infinite real number if $x>{}^*n$ for every ${}^*n=[n,n,n,\ldots]\in {}^*\mathbb{N}$. 
We will write $x\in {}^*\mathbb{R}\setminus \mathbb{R}$ or $x> \mathbb{R}$ to say that $x$ is an infinite real number. Similarly, we will write $N\in {}^*\mathbb{N}\setminus \mathbb{N}$ or $N>\mathbb{N}$ to say that $N$ is an infinite natural number.

\subsection{Nonstandard superstructure}
Let $A$ be a set. Let $\{ V_i(A)\}_{i\in \mathbb{N}}$ be the sequence defined by $V_0(A)=A$ and 
$V_{i+1}(A)=V_i(A) \cup \mathscr{P}(V_i(A))$, where $\mathscr{P}(V_i(A))$ is the power set of $V_i(A)$. Then $V(A) := \bigcup_{i\in \mathbb{N}}V_i(A)$, the superstructure over $A$. The rank of $x\in V(A)$ is the smallest $k$ such that $x\in V_k(A)$. 

Let $V$ be the collection of all (pure) sets, \textit{i.e.}, elements of pure sets are hereditary, and let $\in$ be the universal symbol \textit{the element of}. Denote $\mathscr{V}=\langle V,\in \rangle$, the first-order structure in set theory. 
 We define the ultrapower of $\mathscr{V}$ as 
${}^*\mathscr{V} := \prod_{\mathscr{U}} \mathscr{V} \succ \mathscr{V}$, where $\mathscr{U}$ is an ultrafilter.

The $*$-transformation is a technique to convert a language from the standard universe to the nonstandard world. That is, if $*$ proceeds an element, set, or function, then it represents that the element or set lives in the nonstandard world, or that the operation is computed in the nonstandard universe.   
\begin{theorem}[\L o\'s' Theorem] 
Let $\mathscr{V}=\langle V,\in \rangle$, the superstructure of $V$. Let $\theta(a_0,a_1,\ldots, a_{n-1})$ 
be a first-order statement, and let $\theta({}^*a_0,{}^*a_1,\ldots, {}^*a_{n-1})$ be its $*$-transformation. 
Then 
$\theta(a_0,a_1,\ldots,a_{n-1})$ is true in $\mathscr{V}$ if and only if $\theta({}^*a_0,{}^*a_1,\ldots, {}^*a_{n-1})$ is true in ${}^*\mathscr{V}$. 
\end{theorem}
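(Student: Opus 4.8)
The plan is to deduce the statement as the diagonal (ultrapower) instance of the general \L o\'s' theorem for ultraproducts, proved by induction on the syntactic complexity of the formula. Concretely, for an arbitrary first-order formula $\varphi(x_0,\ldots,x_{n-1})$ and classes $[f^{(0)}],\ldots,[f^{(n-1)}]\in {}^*\mathscr{V}$ with representative sequences $f^{(j)}=(f^{(j)}_0,f^{(j)}_1,\ldots)$, I would establish the key equivalence
\[
{}^*\mathscr{V}\models \varphi([f^{(0)}],\ldots,[f^{(n-1)}]) \iff \{i\in\mathbb{N}: \mathscr{V}\models \varphi(f^{(0)}_i,\ldots,f^{(n-1)}_i)\}\in\mathscr{U}.
\]
The theorem as stated then follows by specializing each $[f^{(j)}]$ to a diagonal class ${}^*a_j=[a_j,a_j,\ldots]$: the index set on the right-hand side collapses to all of $\mathbb{N}$ when $\mathscr{V}\models\theta(a_0,\ldots,a_{n-1})$ and to $\emptyset$ otherwise, and $\mathbb{N}\in\mathscr{U}$ while $\emptyset\notin\mathscr{U}$ by the largeness conditions of \S\ref{subsection:ultrafilters}.

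For the base case I would verify that the two atomic relations, equality and $\in$, are well-defined on $\sim$-classes and satisfy the key equivalence by construction. Indeed, $[f]\sim[g]$ means exactly $\{i: f_i=g_i\}\in\mathscr{U}$, and membership in the ultrapower is defined by $[f]\in[g]$ iff $\{i: f_i\in g_i\}\in\mathscr{U}$, so the atomic case is immediate from the definitions. For the propositional connectives, negation uses the ultrafilter dichotomy $A\in\mathscr{U}\iff A^c\notin\mathscr{U}$ (largeness condition (4)), and conjunction uses closure under finite intersection together with the superset property (conditions (1)--(3)); disjunction and implication follow since they are definable from $\neg$ and $\wedge$.

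The main obstacle is the quantifier step $\varphi=\exists x\,\psi(x,\ldots)$. The direction $\Rightarrow$ is the easier one: a witness is a class $[h]$ with ${}^*\mathscr{V}\models\psi([h],\ldots)$, so by the inductive hypothesis $\{i:\mathscr{V}\models\psi(h_i,\ldots)\}\in\mathscr{U}$, and this set is contained in $\{i:\mathscr{V}\models\exists x\,\psi\}$, hence the latter is large by the superset property. The delicate direction is $\Leftarrow$, where I must manufacture a \emph{single} element of ${}^*\mathscr{V}$ from pointwise witnesses: on the large set where $\mathscr{V}\models\exists x\,\psi(x,\ldots)$ I choose, for each such $i$, some $h_i\in V$ with $\mathscr{V}\models\psi(h_i,\ldots)$ (setting $h_i$ arbitrarily off this set), and then $[h]$ witnesses the existential in the ultrapower. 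This simultaneous selection is precisely where the axiom of choice enters, and it is the crux of the argument.

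One subtlety I would flag in the superstructure setting is that the quantifiers occurring in $\theta$ should be \emph{bounded}, that is, range over the elements of some $V_k(A)$, so that the chosen witnesses $h_i$ sit at a fixed rank and the assembled class $[h]$ is an \emph{internal} element of ${}^*\mathscr{V}$ rather than an arbitrary (possibly external) member of the full ultrapower. Transfer is guaranteed for such bounded formulas, and these are the only ones needed for Theorem~\ref{thm:main}. With this understood, the induction closes at every connective and quantifier, and the diagonal specialization described above yields the theorem.
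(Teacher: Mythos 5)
The paper itself offers no proof of this statement: \L o\'s' theorem is quoted as a standard background result (the reader is pointed to \cite{Robinson96,Robert03,AHFL86} at the start of \S\ref{section:nonstandard-analysis}), so there is no argument in the text to compare yours against. Your proposal is the standard textbook proof by induction on the complexity of the formula, and it is correct in all essentials: the atomic case holds by the definitions of $\sim$ and of membership in the ultrapower; negation is exactly where the ultrafilter dichotomy (rather than a mere filter) is needed; conjunction uses closure under finite intersections and the superset property; and the existential step, passing from a large set of pointwise witnesses to a single witnessing class $[h]$, is indeed the crux and the place where the axiom of choice enters. The diagonal specialization to constant sequences ${}^*a_j=[a_j,a_j,\ldots]$ correctly collapses the biconditional to the facts $\mathbb{N}\in\mathscr{U}$ and $\emptyset\notin\mathscr{U}$. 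Your closing caveat about bounded quantifiers is also well taken, and is arguably the one point the paper's formulation elides: in the superstructure setting transfer is only valid for formulas whose quantifiers range over some fixed level $V_k(A)$, both so that the witnesses assembled in the existential step sit at a fixed rank and so that quantification in ${}^*\mathscr{V}$ is read as ranging over internal elements; for unbounded formulas the ultrapower of the full cumulative structure is not well behaved. Since the applications in \S\ref{section:nonstandard-techniques} and \S\ref{section:proof-main-theorem} only ever transfer bounded statements, your restricted version suffices for the paper's purposes.
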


\subsection{Internal sets}
 
A subset $A$ of the nonstandard universe ${}^*V$ is called internal if there is a set $B\subseteq V$ such that 
\[ 
A = \{ [a_0,a_1,\ldots]\in {}^*V: \{i : a_i\in B \} \in \mathscr{U} \}. 
\]  
In particular, $A\subseteq {}^*\mathbb{R}$ is internal if and only if it is ${}^*B$ for some $B\subseteq \mathbb{R}$.

\begin{example}
A finite subset ${}^*A\subseteq {}^*\mathbb{R}$ such as $\{ {}^*0, {}^*1, {}^*2 \} = {}^*\{ 0,1,2\}$ 
is internal. 
\end{example}

To build internal sets, produce a sequence $\{ A_i\}_{i\in \mathbb{N}}$ of standard sets such that $A=[\{ A_i\}_{i\in \mathbb{N}}]\subseteq {}^*\mathscr{V}$. To generate an internal function $f$ that maps an internal set $A$ to an internal $B$, form a sequence of functions $f=\{ f_i\}_{i\in \mathbb{N}}$ such that each $f_i$ is well-defined between $A_i$ and $B_i$ for each $i$. 
Hence we have $f:A\rightarrow B$ if and only if $f_i:A_i\rightarrow B_i$ for almost all $i$, \textit{i.e.}, for a large subset of the natural numbers. 

 Sets which are not internal are called external. 
Since basic principles of mathematics are broken down for external sets when moving between standard and nonstandard worlds, they are rarely of our interest. 

To see how the reals are embedded in the hyperreals, we introduce monads. 

\begin{definition}
\label{defn:monads}
The standard part $\st(x)$ of a finite $x\in {}^*\mathbb{R}$  
is the unique $a\in \mathbb{R}$ that is closest to $x$, \textit{i.e.}, 
\[ 
\st(x) := \inf\{ a\in \mathbb{R}: {}^*a \geq x \} = \sup\{ a\in \mathbb{R}: {}^*a\leq x\}. 
\] 
The set consisting of $x\in {}^*\mathbb{R}$ such that $\st(x)=a$ is called the monad of $a$, and is written as 
\[ 
\st^{-1}(a)= \mu(a) = \{ x\in {}^*\mathbb{R}: \st(x)=a \}. 
\] 
\end{definition}
 
Since the monad of $0$ is not first-order definable, $\mu(0)$ is not internal. 
More generally, we have the following: 
\begin{proposition}\label{prop:monad-not-internal} 
Let $a\in \mathbb{R}$ and let $\mu(a)$ be the monad of $a$. Then $\mu(a)$ is not internal. 
\end{proposition}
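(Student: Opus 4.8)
The plan is to reduce to the case $a=0$ and then obstruct internality of $\mu(0)$ with a supremum argument. First I would record the translation identity $\mu(a)={}^*a+\mu(0)$. Indeed, the standard part is additive on finite hyperreals and $\st({}^*a)=a$, so $\st(x)=a$ if and only if $\st(x-{}^*a)=0$, that is, $x\in\mu(a)$ if and only if $x-{}^*a\in\mu(0)$. The map $T(y)=y+{}^*a$ is the $*$-transform of the real translation $t\mapsto t+a$ and is therefore an internal bijection of ${}^*\mathbb{R}$; since internal maps send internal sets to internal sets, $\mu(a)=T(\mu(0))$ is internal precisely when $\mu(0)$ is. Hence it suffices to treat $\mu(0)$.

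Next I would argue by contradiction, assuming $\mu(0)$ internal. Then the set of positive infinitesimals
\[
P=\mu(0)\cap\{x\in{}^*\mathbb{R}:x>0\}
\]
is internal, being an intersection of internal sets. It is nonempty, since $[1/n]_{n\in\mathbb{N}}\in P$, and it is bounded above by ${}^*1$, since every infinitesimal has absolute value less than $1$. Invoking the internal least upper bound property, $P$ has a supremum $c\in{}^*\mathbb{R}$. Every positive real $r$ is an upper bound for $P$, because each positive infinitesimal is smaller than $r$; so $c\leq{}^*r$ for all real $r>0$, forcing $\st(c)\leq 0$. Combined with $c\geq[1/n]>0$, this shows $c$ is itself a positive infinitesimal. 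But then $2c$ is also a positive infinitesimal, so $2c\in P$, while $2c>c$ contradicts the fact that $c$ is an upper bound for $P$. This contradiction shows $\mu(0)$ is not internal, and by the reduction neither is any $\mu(a)$.

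The step I expect to be the crux is the appeal to the internal least upper bound property, since the completeness of $\mathbb{R}$ is a priori a second-order statement quantifying over all subsets. I would therefore phrase it as the assertion that every nonempty internal subset of ${}^*\mathbb{R}$ that is bounded above possesses a least upper bound, and obtain it by transfer via \L o\'s' Theorem applied inside the superstructure ${}^*\mathscr{V}$.

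Finally, I would note a shorter route available under the characterization that a subset of ${}^*\mathbb{R}$ is internal exactly when it equals ${}^*B$ for some $B\subseteq\mathbb{R}$. If $\mu(0)={}^*B$, then transferring the true statements ``$\mu(0)$ is closed under $x\mapsto 2x$'' and ``every element of $\mu(0)$ has absolute value below $1$'' yields that $B$ is closed under doubling and contained in $(-1,1)$; iterating $x\mapsto 2x$ then forces $B\subseteq\{0\}$, whence ${}^*B\subseteq\{{}^*0\}$, which is far too small to be $\mu(0)$. This alternative sidesteps completeness entirely and rests only on \L o\'s' Theorem.
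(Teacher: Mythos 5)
Your proof is correct, and it supplies something the paper itself does not: the proposition is stated without proof, the only justification being the preceding one-line remark that $\mu(0)$ ``is not first-order definable,'' which is an informal gloss rather than an argument and addresses only the case $a=0$. Your main route --- reduce to $a=0$ via the internal translation $y\mapsto y+{}^*a$, then apply the internal least upper bound property to the set $P$ of positive infinitesimals --- is the standard rigorous argument, and every step checks: $P$ is internal as an intersection of internal sets, nonempty and bounded by ${}^*1$; its supremum $c$ is forced to be a positive infinitesimal; and $2c\in P$ with $2c>c$ contradicts $c$ being an upper bound. The completeness-by-transfer step you identify as the crux is exactly the tool the paper records (the unproved proposition in \S\ref{subsection:overspill-principle} asserting that a bounded internal subset of ${}^*\mathbb{R}$ has a least upper bound), so your argument fits the paper's toolkit and genuinely fills a gap. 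One caution about your shorter alternative: it rests on the paper's assertion that $A\subseteq{}^*\mathbb{R}$ is internal if and only if $A={}^*B$ for some $B\subseteq\mathbb{R}$, which in the usual ultrapower framework is false --- internal sets are classes $[B_0,B_1,\ldots]$ of arbitrary sequences of subsets, and for instance $\{x\in{}^*\mathbb{R}: 0\le x\le N\}$ with $N$ infinite is internal but equals no ${}^*B$. So the supremum argument should stand as the actual proof, with the alternative kept only as a remark valid under the paper's overly restrictive reading of ``internal.''
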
 

%
%
%
%
%
%


\subsection{Overspill principle and saturation}
\label{subsection:overspill-principle}
In this section, we discuss two theorems frequently used in nonstandard analysis. 
\begin{theorem}[Overspill principle] 
Let $A\subseteq {}^*\mathbb{R}$ be a nonempty internal subset containing arbitrary large finite elements. Then $A$ contains an infinite element. 
\end{theorem}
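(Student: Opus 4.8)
The plan is to argue by contradiction and to exploit the defining feature of internal sets, namely that they inherit the first-order least-upper-bound property of $\mathbb{R}$ via \L o\'s' Theorem. First I would assume, toward a contradiction, that every element of $A$ is finite. Under this assumption $A$ is bounded above in ${}^*\mathbb{R}$: any positive infinite hyperreal $M$ satisfies $x < M$ for each $x\in A$, since a finite number is strictly smaller than an infinite one, and such an $M$ exists in ${}^*\mathbb{R}$.

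The key step is to produce a supremum. The completeness of $\mathbb{R}$ asserts that every nonempty subset of $\mathbb{R}$ bounded above has a least upper bound; phrasing this as a first-order statement over the structure $\mathscr{V}$ and applying the $*$-transformation, \L o\'s' Theorem yields the internal version: every nonempty \emph{internal} subset of ${}^*\mathbb{R}$ that is bounded above has a least upper bound in ${}^*\mathbb{R}$. Since $A$ is internal, nonempty, and (under our assumption) bounded above, I obtain $s := \sup A \in {}^*\mathbb{R}$.

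Next I would locate $s$. Because $A$ contains arbitrarily large finite elements, for every standard $r\in \mathbb{R}$ there is some $x\in A$ with $x > {}^*r$, whence $s \geq x > {}^*r$; as this holds for all $r$, the hyperreal $s$ is positive infinite. Finally, since $s$ is the \emph{least} upper bound, $s-1$ fails to be an upper bound, so there exists $x\in A$ with $s-1 < x \leq s$. But $s$ is infinite, hence $s-1$ is infinite, and therefore $x$ is infinite. This contradicts the assumption that every element of $A$ is finite, and completes the proof.

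The step I expect to be the main obstacle is the passage to the supremum, since the completeness of $\mathbb{R}$ is a second-order property (it quantifies over all subsets) while transfer delivers it only for \emph{internal} subsets. It is exactly here that the internality hypothesis is indispensable: the external set of all finite hyperreals is bounded above (by every infinite number) yet has no supremum in ${}^*\mathbb{R}$, so the same argument would break down without it. I would therefore take care to formulate the least-upper-bound statement as a genuine first-order sentence over $\mathscr{V}$ before invoking \L o\'s' Theorem, so that its $*$-transform legitimately applies to the internal set $A$.
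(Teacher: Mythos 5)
Your proposal is correct and follows essentially the same route as the paper's proof: obtain a least upper bound of $A$ (which exists because $A$ is internal), observe that it must be infinite since $A$ contains arbitrarily large finite elements, and then find an element of $A$ within a finite distance of that bound, which is therefore infinite. If anything, your version is more careful than the paper's, since you explicitly justify the existence of the supremum for internal sets via transfer rather than asserting it.
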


\begin{proof}
Let $A$ be a nonempty internal subset of ${}^*\mathbb{R}$. If $A$ is unbounded, then $A$ must contain at least one infinite element, and we are done. 
So suppose $A$ is bounded. Let $a$ be the least upper bound of $A$. Since $A$ contains arbitrary large finite elements, $a$ must be infinite. If there is no $x\in A$ such that $a-\varepsilon \leq x\leq a$ for some positive $\varepsilon$, then $a-\varepsilon$ is the least upper bound of $A$, which is a contradiction. 
Thus, there is some $x\in A$ such that $a-\varepsilon \leq x\leq a$, and we see that $A$ contains an infinite element. 
\end{proof} 

An alternative way to think of the overspill principle is that if a statement is true for all infinitesimals, then it is true for some standard positive number.  

\begin{proposition}
Let $\mathbb{R}$ be a domain in $\mathscr{V} = \langle V, \in \rangle$. 
Suppose $A\subseteq {}^*\mathbb{R}$ is an internal subset and each $x\in A$ is a finite element. Then $A$ contains a least upper bound. 
\end{proposition}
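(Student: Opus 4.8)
The plan is to combine the overspill principle with the transfer of the completeness of $\mathbb{R}$. Assuming $A$ is nonempty (as in the hypothesis of the overspill principle, and as is needed for a least upper bound to exist at all), I would first reduce the problem to showing that $A$ is bounded above by a standard real. Since every $x\in A$ is finite, the set $A$ contains no infinite element; the overspill principle, read in contrapositive form, then forbids $A$ from containing arbitrarily large finite elements, for otherwise it would be forced to contain an infinite element. Hence there is a standard $M\in\mathbb{R}$ with $x\le {}^*M$ for all $x\in A$, so $A$ is bounded above in ${}^*\mathbb{R}$ by the standard (in particular finite) bound ${}^*M$.

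With boundedness in hand, I would invoke \L o\'s' Theorem. The least-upper-bound property of $\mathbb{R}$ — every nonempty subset that is bounded above admits a supremum — is a statement in $\mathscr{V}=\langle V,\in\rangle$ once its set quantifier is read as ranging over the sets of the superstructure. Its $*$-transform asserts that every nonempty \emph{internal} subset of ${}^*\mathbb{R}$ that is bounded above has a least upper bound in ${}^*\mathbb{R}$. Since $A$ is internal, nonempty, and bounded above by the previous step, $\sup A$ exists in ${}^*\mathbb{R}$; moreover, for every $x_0\in A$ we have $x_0\le \sup A\le {}^*M$, so $\sup A$ lies between a finite element of $A$ and the finite bound ${}^*M$, and is therefore itself finite.

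The step I expect to be the main obstacle is the careful application of transfer to completeness. Completeness is not a genuinely first-order property over \emph{all} subsets of ${}^*\mathbb{R}$: the external collection of finite hyperreals is bounded above (by any infinite number) yet has no least upper bound in ${}^*\mathbb{R}$, so the naive transfer fails. The crucial point is that \L o\'s' Theorem preserves the least-upper-bound property only for \emph{internal} subsets, which is exactly where the hypothesis that $A$ is internal is indispensable. The finiteness hypothesis enters only through the overspill step, to guarantee that $A$ is bounded above so that the transferred property can be applied.
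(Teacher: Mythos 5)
The paper states this proposition without any proof, so there is nothing to compare against directly; judged on its own, your argument is correct and is the natural one. Your two ingredients --- overspill to get a \emph{standard} upper bound ${}^*M$, and the $*$-transform of the least-upper-bound property (quantifying over elements of ${}^*\mathscr{P}(\mathbb{R})$, i.e.\ over internal subsets) to extract $\sup A$ --- are exactly the tools the paper has set up, and indeed the paper's own proof of the overspill principle already invokes the transferred completeness property when it says ``let $a$ be the least upper bound of $A$,'' so your appeal to internal completeness is consistent with (and logically prior to) the surrounding text. Two small remarks. First, if the conclusion is read merely as ``$A$ has a least upper bound in ${}^*\mathbb{R}$,'' the overspill step is not needed at all: every element of $A$ is finite, so any infinite hyperreal is already an upper bound, and internal completeness alone finishes the job; overspill is what upgrades the conclusion to the (presumably intended) statement that $\sup A$ is \emph{finite}, which you correctly note. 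Second, your handling of the empty set and your explicit warning that completeness does not transfer to external sets (the finite hyperreals being the canonical counterexample) are both points the paper leaves implicit, and they are worth keeping.
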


The overspill principle is also used to distinguish sets that are not internal to ${}^*\mathscr{V}$ for if they were internal, then we would be assuming $\mathbb{R}={}^*\mathbb{R}$, which is clearly false. 

\begin{theorem}[Saturation]
\label{theorem:saturation}
Let $\{A^i \}_{i\in \mathbb{N}}$ be a sequence of internal sets satisfying $\bigcap_{i=0}^{n} A^i\not=\varnothing$ 
for each $n\in \mathbb{N}$, where each $A^i=[A_0^i,A_1^i,A_2^i,\ldots]$. 
Then $\bigcap_{i\in \mathbb{N}}A^i\not=\varnothing. $
\end{theorem}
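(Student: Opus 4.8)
The plan is to produce an explicit element of $\bigcap_{i\in\mathbb{N}} A^i$ by a diagonal construction: I will use \L o\'s' Theorem to translate the finite-intersection hypothesis into statements about large index sets, assemble a candidate hyperreal coordinate-by-coordinate, and then exploit the nonprincipality of $\mathscr{U}$ (namely that every cofinite subset of $\mathbb{N}$ is large) to verify membership in each $A^i$.

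First I would fix the inner (ultrapower) index, say $j$, so that membership reads $x\in A^i$ if and only if $\{j : x_j\in A^i_j\}\in\mathscr{U}$. For each $n\in\mathbb{N}$, applying \L o\'s' Theorem to the true statement $\bigcap_{i=0}^{n} A^i\neq\varnothing$ transfers it to the assertion that $S_n := \{\, j\in\mathbb{N} : \bigcap_{i=0}^{n} A^i_j\neq\varnothing \,\}$ lies in $\mathscr{U}$. Since adjoining another set only shrinks an intersection, these sets are nested downward, $S_0\supseteq S_1\supseteq S_2\supseteq\cdots$, and every $S_n\in\mathscr{U}$.

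Next comes the diagonalization. For each coordinate $j$ I would set $n(j):=\max\{\, n\leq j : j\in S_n \,\}$, taking $x_j$ arbitrary on the small exceptional set where $j\notin S_0$. By construction $j\in S_{n(j)}$, so $\bigcap_{i=0}^{n(j)} A^i_j\neq\varnothing$ and I may choose a representative $x_j$ inside this finite intersection of standard sets. Setting $x:=[x_0,x_1,x_2,\ldots]$, the claim is that $x\in A^i$ for each fixed $i$. To verify this I would show the witness set $\{\, j : x_j\in A^i_j \,\}$ contains $\{\, j\geq i \,\}\cap S_i$: indeed, if $j\geq i$ and $j\in S_i$, then $i$ competes in the maximum defining $n(j)$, so $n(j)\geq i$, whence $x_j\in\bigcap_{k=0}^{n(j)} A^k_j\subseteq A^i_j$.

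The crux, and the step I expect to be the main obstacle, is confirming that this witness set is genuinely large rather than merely nonempty. This is exactly where nonprincipality of $\mathscr{U}$ is essential: the tail $\{\, j\geq i \,\}$ is cofinite and therefore belongs to $\mathscr{U}$, so $\{\, j\geq i \,\}\cap S_i\in\mathscr{U}$ by the intersection axiom for filters, and upward closure then forces $\{\, j : x_j\in A^i_j \,\}\in\mathscr{U}$, that is, $x\in A^i$. As $i$ was arbitrary, $x\in\bigcap_{i\in\mathbb{N}} A^i$, so the intersection is nonempty. A principal ultrafilter would collapse the argument, since a single coordinate would then have to witness every $A^i$ at once, which the hypothesis does not provide; the role of nonprincipality is precisely what upgrades the automatic finite saturation to the countable saturation asserted here.
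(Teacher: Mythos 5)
Your argument is correct: it is the standard diagonalization proof that the ultrapower of $\mathscr{V}$ by a nonprincipal ultrafilter on $\mathbb{N}$ is countably ($\aleph_1$-) saturated for internal sets. There is nothing in the paper to compare it against: Theorem~\ref{theorem:saturation} is stated without proof and only the reference \cite{Cutland97} is given, so your write-up actually supplies the missing argument. Every step checks out. The fact that $S_n\in\mathscr{U}$ follows either from the full ultraproduct form of \L o\'s' Theorem or, even more elementarily, by taking a point $x\in\bigcap_{i=0}^{n}A^i$ and intersecting the finitely many witness sets $\{j: x_j\in A^i_j\}\in\mathscr{U}$ for $i\le n$; the nesting $S_0\supseteq S_1\supseteq\cdots$ makes $n(j)=\max\{n\le j: j\in S_n\}$ well defined on $S_0$; and the containment $\{j\ge i\}\cap S_i\subseteq\{j: x_j\in A^i_j\}$, combined with the fact that a nonprincipal ultrafilter contains all cofinite sets, forces $x\in A^i$ for every $i$. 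One minor caveat worth flagging: the version of \L o\'s' Theorem stated in the paper only covers starred (i.e., standard) parameters ${}^*a_k$, whereas your transfer step applies it with the arbitrary internal parameters $A^0,\dots,A^n$; the elementary intersection argument just indicated sidesteps this entirely, so the proof does not actually depend on the stronger form. Your closing remark about why principality would break the argument correctly identifies where countable incompleteness of $\mathscr{U}$ enters.
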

Theorem~\ref{theorem:saturation} is also known as $\aleph_1$-saturation (cf. \cite{Cutland97}).

\section{Hausdorff measure}
\label{section:Hausdorff-measure} 
We begin with some preliminary definitions. Also see \cite{Federer69,Hausdorff18,Ott02,Rogers70} for further background on Hausdorff measure.  
Let $d:\mathbb{R}^n\times \mathbb{R}^n\rightarrow \mathbb{R}$ be the Euclidean magnitude 
$d(x,y)=|\!|x-y|\!|:= \left(\sum_{i=1}^{n}(x_i-y_i)^2\right)^{1/2}$ for $x,y\in \mathbb{R}^n$. 
Given $U\subseteq \mathbb{R}^n$, 
the diameter of $U$ is defined as 
$\diam(U)=\sup\{ d(x,y): x,y\in U\}$. 
Now let $A\subseteq \mathbb{R}^n$. For any $U_i\subseteq \mathbb{R}^n$, the collection $\{ U_i: i\in \mathbb{N}\}$ 
is a $\delta$-cover of $A$ if 
\begin{enumerate}
\item $\bigcup_{i\in\mathbb{N}} U_i \supseteq A$, and  
\item $0\leq \diam(U_i)\leq \delta$ for each $i\in \mathbb{N}$. 
\end{enumerate}
A $\delta$-covering of a set is chosen such that the differences 
$\bigcup_i U_i\setminus A$ and 
$A\setminus \bigcup_i U_i$ are negligible sets\footnote{ Negligible sets form an ideal, and in this paper, we assume that negligible sets are $\sigma$-ideal, \textit{i.e.}, a countable union of negligible sets is negligible.}.

\begin{definition}
Let $A$ be a subset of $\mathbb{R}^n$, and let $s,\delta >0$. 
The Hausdorff $\delta$-measure of $A$ is defined as  
\[ 
\mathcal{H}_{\delta}^s(A) = \inf 
\left\{ 
\sum_{i=0}^{\infty} \diam(U_i)^s
\right\}, 
\]   
where the infimum is taken over every $\delta$-cover $\{ U_i\}_{i\in \mathbb{N}}$ of $A$. 
\end{definition}
We will simply write Hausdorff $\delta$-measure as Hausdorff measure. 

\begin{example}
\label{example:Cantor-classical-Hausdorff}
Let $C$ be the Cantor set on the unit interval. That is, it is generated by cutting the middle $1/3$ from each of the previous connected line segments. So letting $C_0=[0,1]$, $C_1=[0,1/3]\cup [2/3,1]$, 
$C_2 = [0,1/9]\cup [2/9,3/9]\cup [6/9,7/9]\cup [8/9,1]$, $\ldots$, 
$C_m=\bigcap_{i=0}^m C_i$. At $m$-th iteration, there are $2^m$ intervals of length $3^{-m}$ to cover $C_m$. So 
\[ 
\mathcal{H}_{\delta}^s(C) = 2^m 3^{-ms}, 
\]  which implies the critical value $s$ must be $\log 2/\log 3$.  
\end{example} 

\begin{definition}
\label{defn:s-dim-Hausdorff-outer-measure}
Let $A\subseteq \mathbb{R}^n$, and let $s >0$. 
Then $s$-dimensional Hausdorff outer measure of $A$ is
\[ 
\mathcal{H}^s(A) = \lim_{\delta\rightarrow 0} \mathcal{H}_{\delta}^s(A) = \sup_{\delta >0} \mathcal{H}_{\delta}^s(A). 
\]  
\end{definition}

The Hausdorff measure $\mathcal{H}_{\delta}^s(A)$ increases as $\delta$ decreases. 
In fact, $\mathcal{H}^s(A)$ is a nonincreasing function as $s\rightarrow \infty$.
 
\begin{definition}
\label{defn:Hausdorff-dim}
Let $A$ be a subset of $\mathbb{R}^n$. The Hausdorff dimension of $A$ is 
\[  
\dim_{\mathcal{H}}(A) = \inf \{ s: \mathcal{H}^s(A)=0 \}
= \sup \{ s: \mathcal{H}^s(A)=\infty\}. 
\] 
\end{definition} 
An interpretation of the infimum in Definition~\ref{defn:Hausdorff-dim} is as follows: 
if $s$ is greater than the actual Hausdorff dimension of $A$, then $\mathcal{H}^s(A)=0$, which is the reason for us to 
take the infimum over all such $s$. 
A similar argument holds for the second equality.

\section{Nonstandard analysis techniques on Lebesgue and Hausdorff measure}
\label{section:nonstandard-techniques}
In this section, we develop a notion of measure in the nonstandard world by using nonstandard analysis techniques to explore various ways to measure a set. 

Consider the unit interval, and let $\Omega=\{0,1/N,\ldots, i/N,\ldots, 1 \}$, where $N\in {}^*\mathbb{N}\setminus \mathbb{N}$ is a nonstandard natural. 
Note that the standard map for $\Omega$ sends all points to the interval $[0,1]$.  
Recall from Proposition~\ref{prop:monad-not-internal} that, for some set $A\subseteq [0,1]$,   
$\st^{-1}(A)\subseteq \Omega$ is not necessary internal. In particular, studying the cardinality $\card(\st^{-1}(A))$ is meaningless. Hence we will approximate this set using internal sets. 

\subsection{Lebesgue measure in the nonstandard universe} 
\label{subsection:Lebesgue} 
We refer to \cite{Kestelman60,Solovay70,CFG91} for some background on Lebesgue measure. 
In this section, we define a nonstandard version of Lebesgue measure. 

Let $A\subseteq [0,1]$ be a nonempty set. Let $B$  be an internal subset of $\Omega$ such that $B\subseteq \st^{-1}(A)$. Note that there may be many internal $B$ contained in $\st^{-1}(A)$.  
Each $B$ has a discrete measure in $\Omega$, which is the discrete probability measure, 
where each $x\in\Omega$ is equally likely. We define the discrete measure of $B$ as 
\[ 
d(B) = \frac{\card(B)}{N+1}, 
\] 
where $N\in {}^*\mathbb{N}\setminus \mathbb{N}$. Note that $d(B)\in {}^*\mathbb{Q} \cap {}^*[0,1]$.

\begin{definition}
\label{defn:lower-upper-Lebesgue-measure} 
Let $A$ be a subset of $[0,1]$, and let $\Omega$ be the set $\{ 0,1/N,\ldots, i/N, \ldots, 1\}$, where $N\in {}^*\mathbb{N}\setminus \mathbb{N}$. 
Let $d(B)$ be defined as above for some internal set $B$. Lower Lebesgue measure of $A$ is defined to be 
\[ 
\underline{\lambda}(A) = \sup\{ \st \: d(B) :\text{internal } B \subseteq \st^{-1}(A) \}, 
\] 
and upper Lebesgue measure of $A$ is defined as 
\[ 
\overline{\lambda}(A) = \inf \{ \st \: d(B) : \text{internal } B \supseteq \st^{-1}(A) \}. 
\] 
 We say $A$ is Lebesgue measurable if $\overline{\lambda}(A) = \underline{\lambda}(A)$, and we write $\lambda(A)$ when $A$ is Lebesgue measurable. 
\end{definition}


\subsection{Hausdorff measure in the nonstandard universe} 
\label{subsection:Hausdorff} 

Hausdorff outer measure is related to Lebesgue outer measure by 
\begin{equation}
\label{eqn:Hausdorff-outer-measure-Lebesgue-outer-measure}
\mathcal{H}^s(A) = c_s \overline{\lambda}(A)  
\end{equation}
for some constant $c_s$ that depends on $s$, and for nice sets $A$. That is, given a nice set embedded in $\mathbb{R}^n$ with positive integral dimension $s$, we use Lebesgue outer measure to find the measure of $A$. 

We now give a discrete version of Hausdorff measure.  
\begin{definition}
\label{defn:h-delta-s}
Given $\delta>0$, $s$ in the unit interval, and $N> \mathbb{N}$, a $\delta$-interval of $\Omega$ is a set $\{i/N,(i+1)/N,\ldots, j/N\}$ with diameter $(j-i+1)/N\leq \delta$. 
For an internal set $B\subseteq \Omega$, the discrete $s$-dimensional measure is defined as 
\begin{equation}
\label{eqn:h-delta-s}
h_{\delta}^s(B) = \min\sum_{i=1}^L (\diam(V_i))^s, 
\end{equation}
where we take the minimum over all partitions $\{ V_1,\ldots, V_L\}$ of $B$ into $\delta$-intervals. 
\end{definition}

The finite $L\in {}^*\mathbb{N}$ varies, depending on the partition. In the discrete space $\Omega$, this definition is internal and since there are finitely-many nonstandard partitions to consider, the min in \eqref{eqn:h-delta-s} is a true minimum. Moreover, since the $V_i$'s are subsets of $\Omega$ which have been normalized, the need for renormalization is unnecessary.

 
\section{Proof of Theorem~\ref{thm:main}}
\label{section:proof-main-theorem}
\begin{proof}
We will prove \ref{item:a} first. Assume that the infinitesimal $\delta$ is fixed, and that $h_{\delta}^s(B)$ is finite for some internal $B\supseteq \st^{-1}(A)$. Let $\eta >0$ be standard. 
Our strategy is to find an $\eta$-cover $U_i$ for $i\in \mathbb{N}$ such that $\sum_i (\diam (U_i))^s \leq \st(h_{\delta}^s(B))$. 

Take an optimal partition $\{ V_j \}_{j=1}^K$ of $B$ in the sense of $h_{\delta}^s(B)$. Using an internal induction in the nonstandard universe, modify $\{ V_j\}_{j=1}^K$ to some other partition $\{ W_k\}_{k=1}^{L}$ as follows. When defining some $W_k$, given an internal interval $I\subseteq \Omega$, consider the leftmost $V_i$ that is to the right of $I$. 
If $I\cup V_i$ is also an interval, replace $I$ with this interval and continue. 
This process stops when one of the following occurs: 
\begin{enumerate}
\item\label{item:process1} $I\cup V_i$ is not an interval, or there is no further $V_i$ to the right of $I$, 
\item\label{item:process2} $\diam(I) \geq \eta$.  
\end{enumerate}

If either \eqref{item:process1} or \eqref{item:process2} happens, we stop the construction and let $W_k = I$. 
We then construct $W_{k+1}$ starting with $I'$, the leftmost $V_i$ to the right of $I$ if any. 

At the end of this construction, we will have nonstandard finitely-many intervals $W_k$ of length at most $\eta + \delta$ partitioning $B$. This gives a countable $\eta$-cover $\mathcal{U}$ of our original $A$ consisting of all sets $U=\st(W_k)$ for some $W_k$ such that the set $U$ has nonempty interior. 

To see that this does indeed cover $A$, consider $a\in A$. Then the monad of $a$ is contained in $B$. By the overspill principle, this monad is contained in an interval $J\subseteq B$, where the length of $J$ is not infinitesimal. 
But then, by construction, the monad of $a$ is contained in either some set $W_k$ or else, in two neighboring sets $W_k$ and $W_{k+1}$ (the latter case is when the construction of the set $W_k$ was ``finished'' whilst inside the monad of $a$). Moreover, $W_k$ (or in the other case both of $W_k$ and $W_{k+1}$) have lengths that are non-infinitesimal, by construction and by choice of $\eta$. So $a$ is either in the interior of $\st(W_k)$ or else is an endpoint of both $\st(W_k)$ and $\st(W_{k+1})$, as required. The cover $\mathcal{U}$ is countable because any set of intervals in the real number line, all with nonempty interior, must necessarily be countable. Finally, since without loss of generality, $s\leq 1$ (since we are working on the unit interval) and hence $(a+b)^s \leq a^s + b^s$ for $a,b > 0$, we have 
\[ 
\sum_{U\in \mathcal{U}} (\diam U)^s = \st\sum_k (\diam (W_k))^s \leq \st \sum_i (\diam (V_i))^s, 
\] 
as required.

We will now prove \ref{item:b}. 
We observe first that for $0< \delta < \eta$, even if $\delta$ is not infinitesimal, then the argument just given shows that $\mathcal{H}_{\eta}^s(A)\leq \st \: h_{\delta}^s(B)$ for all internal $B\supseteq \st^{-1}(A)$. 
Thus we only have to prove the other direction. It suffices to show that for each standard $\delta >0$ and each standard $\varepsilon >0$, there is an internal $B\supseteq \st^{-1}(A)$ with $\mathcal{H}_{\delta}^s(A) \geq \st(h_{\eta}^s(B))-\varepsilon$ for a certain $\eta$ depending only on $\delta$ and $\varepsilon$. 
Let $\lambda = \mathcal{H}_{\delta}^s(A)$. 

Let $\mathcal{U} = \{ U_i\}_{i\in \mathbb{N}}$ be a $\delta$-cover of $A$ such that 
\[ 
\sum_{i=1}^{N}(\diam (U_i))^s \leq \lambda + (\varepsilon/2), 
\]  
and assume without loss of generality that each $U_i$ is an interval. We ``enlarge'' $U_i$ by increasing its length by $(\varepsilon/2)^{1/s} 2^{-i/s}$ 
on each side, obtaining intervals $V_i$. 
By saturation, there is a sequence of intervals $W_i\subseteq \Omega$ such that $\st(W_i)=V_i$ and $W_i$ is defined for all $i< K$, where $K>\mathbb{N}$. Then for any $N>\mathbb{N}$, we have $\bigcup_{i=1}^N W_i \supseteq \st^{-1}(A)$ because of the ``enlarging''. Moreover, for each $N\in \mathbb{N}$, we have 
\begin{align*} 
\sum_{i=1}^{N} (\diam(W_i))^s &\leq \sum_{i=1}^{N} (\diam(U_i) + (\varepsilon/2)^{1/s} 2^{-i/s})^s \\  
&\leq  \sum_{i=1}^N  (\diam(U_i))^s + (\varepsilon/2) + (\varepsilon/2)\sum_{i=1}^N 2^{-i} \\ 
&\leq \lambda + \varepsilon. 
\end{align*} 
By the overspill principle, there is an infinite $N$ such that the above inequalities hold. Thus, some $B=\bigcup_{i=1}^N W_i$ has a partition showing $h_{\eta}^s(B)\leq \lambda + \varepsilon$. 
Finally, note that the maximum diameter of any $W_j$ is $\delta+ (\varepsilon/2)^{1/s} 2^{-1/s}$, which may be made as close to $\delta$ as we like by choosing $\varepsilon$ sufficiently small. This completes the proof. 
\end{proof}

\section{Example to the main theorem}
\label{section:thm-inequality-example}

Unfortunately, it does not seem possible to replace the limit as $\delta\rightarrow 0$ over standard $\delta$ with an infinitesimal $\delta$ in Theorem~\ref{thm:main}\ref{item:b}. In other words, the inequality in Theorem~\ref{thm:main}\ref{item:a} cannot be replaced by an equality, even for carefully chosen $\delta$, as shown by the following example. 

\begin{example}\label{ex:counter-ex-thm-other-inequality}
Consider the Cantor set $C\subseteq [0,1]$ in Example~\ref{example:Cantor-classical-Hausdorff}, which has dimension $s=\log 2/\log 3$.  
Given an internal $B\supseteq \st^{-1}(C)$ and a positive infinitesimal $\delta$, 
we will estimate $h_{\delta}^s(B)$. 

For each $a\in C$, the monad $\st^{-1}(a)$ is covered by an interval of non-infinitesimal length $I\subseteq B$. Taking all such intervals and mapping them back to the unit interval via the standard part map, and taking the interiors of these sets, we obtain an open cover of $C$. Since $C$ is closed and bounded (hence compact), there is a finite subcover. This shows that we may assume that $B$ is one of the sets of $2^m$ intervals of length $3^{-m}$ obtained at the $m$-th stage of the construction of $C$ (if not, such collection would be smaller than $B$). 
For an interval $I\subseteq \Omega$ of length $\ell$, we have that $h_{\delta}^s(I)$ is approximately $(\ell/\delta)\delta^s$, so 
\[ 
h_{\delta}^s(B) = 2^m \frac{3^{-m}}{\delta}\delta^s = (2/3)^m \delta^{s-1}. 
\] 
Since $0<s<1$ and $\delta$ is an infinitesimal, this value is infinite for all standard $m\in \mathbb{N}$. 
Hence $\st (h_{\delta}^s(B))=\infty$ for all internal $B\supseteq \st^{-1}(C)$. 
\end{example} 
 
One can speculate that the problem is that for an infinitesimal $\delta$, the function $h_{\delta}^s$ measures the size of $B$ in terms of $s<1$ , whereas the local structure of $B$ shows that it has dimension $1$. We leave it as future work to determine for which sets we have an equality in Theorem~\ref{thm:main}\ref{item:a} for a suitably chosen infinitesimal $\delta$.

\bibliography{litlist-nonstandard} \label{references}
\bibliographystyle{plain}

\end{document}